\newtheorem{theorem}{Theorem}
\newtheorem{corollary}[theorem]{Corollary}
\newtheorem{lemma}[theorem]{Lemma}
\def\tst{{h}}
\def\RR{\mathbb{R}}
\def\CC{\mathbb{C}}
\def\domD{\mathcal{D}}
\title[Computing the modified energy]
      {On an asymptotic method for computing the modified energy 
       for symplectic methods}
\author[Per Christian Moan and Jitse Niesen]{}
\subjclass{Primary: 65P10; Secondary: 37J40, 37M15.}
\keywords{modified energy, symplectic integration, 
  Hamiltonian systems, Richardson extrapolation}
\email{pcmoan@gmail.com}
\email{jitse@maths.leeds.ac.uk}
\begin{document}
\maketitle

\centerline{\scshape Per Christian Moan}
\medskip
{\footnotesize
 \centerline{Centre of Mathematics for applications}
 \centerline{University of Oslo}
 \centerline{Norway}
}

\medskip

\centerline{\scshape Jitse Niesen}
\medskip
{\footnotesize
 \centerline{School of Mathematics}
 \centerline{University of Leeds}
 \centerline{United Kingdom}
}

\begin{abstract} 
  We revisit an algorithm by Skeel \textsl{et al.}~\cite{engle05med,
    skeel01pco} for computing the modified, or shadow, energy
  associated with the symplectic discretization of Hamiltonian
  systems. By rephrasing the algorithm as a Richardson extrapolation
  scheme arbitrary high order of accuracy is obtained, and provided
  error estimates show that it does capture the theoretical
  exponentially small drift associated with such
  discretizations. Several numerical examples illustrate the theory.
\end{abstract}

\section{Introduction} 

Numerical simulation of conservative differential equations requires
special care in order to avoid introducing non-conservative, or
non-physical truncation error effects. For Hamiltonian ODEs or
Euler--Lagrange equations originating from variational principles
there exists much evidence~\cite{hairer97lob, leimkuhler05shd,
  moan04oka, reich99bea} that the proper discretization scheme
should be \emph{symplectic}~\cite{hairer02gni, leimkuhler05shd,
  moan04oka, tupper05ean}. In the Hamiltonian case this can be
achieved by imposing special conditions on classical methods or by
methods based on generating functions~\cite{hairer02gni}. In the
variational formulation symplecticity is achieved by discretizing the
action integral and carrying out a discrete
variation~\cite{marsden01dma}. In some cases these formulations and
methods turn out to be equivalent by the Legendre
transformation~\cite{jay07bcr, marsden01dma}.

Focusing on the Hamiltonian side, symplecticity implies that the
trajectory produced by the numerical algorithm \emph{is the exact
  solution}~\cite{moan06orm} of another, non-autonomous ``modified''
Hamiltonian system close to the original one. Various stability
results for Hamiltonian ODEs then apply, leading to an understanding
of the dynamics of such discretizations schemes~\cite{hairer97lob,
  leimkuhler05shd, moan04oka, shang99kto}. Early results on modified
equations focused on the autonomous part~\cite{benettin94ohi,
  calvo94mef, hairer97lob, moan06ome, reich99bea} and established that
its flow is exponentially close to the numerical trajectory. This work
was motivated by the bounded error in energy observed in simulations
with symplectic schemes. The early results are contained in the newer
results since the time-dependent part is exponentially small due to
analyticity. Despite its smallness the non-autonomous term excites
instabilities through resonances, one consequence being a drift in the
modified energy. In simulations requiring millions of steps such as in
molecular dynamics~\cite{leimkuhler05shd, skeel97fos} and celestial
mechanics~\cite{wisdom92smf} these effects become significant and it
becomes important to understand and control them.

Constructing the modified Hamiltonian is equivalent to evaluating many
terms in the Baker--Campbell--Hausdorff formula, or its continuous
analogue~\cite{moan06ome}, a combinatorially complicated task possible
only for small systems and to a low order of accuracy.  Recently, Skeel
and coworkers~\cite{engle05med, skeel01pco} devised a method for
numerically computing the value of the modified Hamiltonian along the
numerical trajectory, thus allowing us to track the possible drift in
the modified energy. In this paper we simplify this method, possibly
at the cost of extra storage, and provide exponentially small error
bounds when it is applied in the asymptotic regime. It is then used to
verify, and justify the theory of modified equation on several test
equations and methods.

\section{Modified equations}

As alluded to in the introduction, the numerical solution of an ODE
$y' = f(y)$ is interpolated by the exact solution of a modified ODE
$\overline{y}' = f(\overline{y},t)$. The modified equation is
non-autonomous, but the non-autonomous part is exponentially small in
the step size. More precisely, given an analytic vector field $f$ and
a one-step method defined by an analytic mapping~$\Psi_{\tst,f}$,
there exists an analytic vector field $\overline f(y,t)$,
$\tst$-periodic and analytic in $t$, whose exact flow exactly
interpolates the numerical trajectory $\{x_n\}$,
$x_{n+1}=\Psi_{\tst,f}(x_n)$. The construction in~\cite{moan06orm}
starts by constructing a modified vector field $\tilde f(y,t)$ which
is only $C^\infty$ in $t$ whose flow interpolates $\{x_n\}$. This
vector field is then transformed by a time-dependent coordinate
transformation into a vector field $\overline f$ analytic in $t$.

The domain of analyticity of $\tilde f$ plays an important role in the
analysis, and we have found it useful to assume that $\tilde f$ is
analytic for all $y$ in a domain of the form
\[
\domD_y := \bigcup_{t>0}\{z\in \CC^d: |\tilde y(t)-z|_\infty < \tilde r_y\}
= \bigcup_{t>0}\{z\in \CC^d: |\Im(\tilde y(t)-z)|_\infty < \tilde r_y\}
\]
for some $\tilde r_y>0$, where $\tilde y(t) =
\phi_{t,\smash{\tilde{f}}}(y_0)$ is the trajectory of the smooth
modified vector field $\tilde f$. This domain is typically smaller
than the domain of analyticity of $f$, and depends on the numerical
method. In the following we will use the sup-norm
$\|f\|_{\domD}=\sup_{z\in \domD_y} |f(z)|_\infty$. With these
definitions the main result of~\cite{moan06orm} in the limit
$\tst\rightarrow 0$ can be formulated as

\begin{theorem}\label{BEA} 
  Let $\Psi_{\tst,f}$ be a one-step method applied to the analytic
  vector field~$f$, and $y_{n+1}=\Psi_{\tst,f}(y_n)$ be the
  approximations obtained by iterating $\Psi$. Then there exists a
  modified vector field $\overline f(\overline y,t) = f(\overline y) +
  r_1(\overline y) + r_2(\overline y,t)$ which is $h$-periodic in $t$
  and analytic in $(y,t)\in \overline \domD'$ such that its exact flow
  satisfies $\overline y(n\tst)=\Phi_{n\tst,\overline{f}}(y_0) = x_n$.
  In the limit $\tst \rightarrow 0$ we have the estimates
  \begin{align*}
    \|\overline f\|_{\overline \domD'} &\leq \frac{2}{1-\eta} \|f\|_{\domD}
    \\
    \|r_2\|_{\overline \domD'} & =
    \mathcal{O}\left(\frac{\|f\|_{\domD}}{\tst}
      \exp\left(-\eta\frac{2\pi\delta}{\|f\|_{\domD}e \tst } \right) 
    \right) 
  \end{align*}
  for $0<\eta<1$, $0< \delta < \tilde r_y$. The domain of analyticity
  of the modified vector field is 
  \[
  \overline \domD' = \left\{ (z,\tau) \in \CC^d\times \CC :
  |\Im(z-\overline y(t))|_\infty< \tilde r_y-\delta,\,
  |\Im(\tau-t)| < \frac{\eta \delta}{\|f\|_{\domD} e} \right\}, 
  \]
  and the norm $\|\,\cdot\,\|_{\domD}$ is defined by $\|\overline
  f\|_{\overline \domD'}=\sup_{(z,\tau)\in \overline \domD'}|\overline
  f(z,\tau)|_\infty$.
\end{theorem}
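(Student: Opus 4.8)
The plan is to realize the two-stage construction sketched in the text: first build a smooth, $\tst$-periodic field $\tilde f(y,t)$ whose flow interpolates $\{x_n\}$, and then remove its non-analytic $t$-dependence by a coordinate change that is the identity at the stroboscopic times $t=n\tst$. The stroboscopic triviality is what lets me upgrade regularity without destroying the interpolation property.

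For the first stage I would exploit that $\Psi_{\tst,f}$ is analytic and $\mathcal{O}(\tst)$-close to the identity. Choose a smooth path $\theta\mapsto g_\theta$ of analytic maps from $g_0=\mathrm{id}$ to $g_1=\Psi_{\tst,f}$, reparametrize $\theta$ through a smooth profile of $t/\tst\bmod 1$ that is flat at the endpoints, and set $\tilde f(y,t)=\bigl(\tfrac{d}{dt}g_{\theta(t)}\bigr)\circ g_{\theta(t)}^{-1}$. Then the time-$\tst$ flow of $\tilde f$ is exactly $\Psi_{\tst,f}$, so $\phi_{n\tst,\tilde f}(y_0)=x_n$; the flatness of the profile makes $\tilde f$ genuinely $C^\infty$ and $\tst$-periodic in $t$ and analytic in $y$ on $\domD_y$. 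The cost is unavoidable high-frequency content, so $\tilde f$ is not yet analytic in $t$.

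For the second stage I would expand $\tilde f$ in a Fourier series in the fast phase $2\pi t/\tst$ and kill the oscillating modes by an averaging (Lie-transform) change of variables $y=\Xi(\overline y,t)$, taken $\tst$-periodic and equal to the identity whenever $t=n\tst$. This stroboscopic triviality preserves $\Phi_{n\tst,\overline f}(y_0)=x_n$, while each averaging step moves an autonomous correction into $r_1$ and pushes the non-autonomous part one order higher; truncating optimally after $N\sim 1/\tst$ steps leaves the analytic, exponentially small oscillation $r_2$, with the non-analytic high-frequency content of $\tilde f$ absorbed entirely into $\Xi$. Summing the geometric series of these near-identity corrections, whose common ratio is controlled by $\eta$, and applying a Cauchy estimate yields $\|\overline f\|_{\overline\domD'}\le\frac{2}{1-\eta}\|f\|_\domD$.

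The heart of the matter, and the step I expect to be hardest, is the exponential bound on $r_2$, where the width $\rho$ of the temporal analyticity strip must be pinned to the spatial loss $\delta$. The classical lower bound on the analyticity radius of the flow of a field bounded by $M$ on a ball of radius $r$ — analytic in complex time for $|t|<r/(eM)$ — forces $\rho=\eta\delta/(e\|f\|_\domD)$, which is precisely the strip appearing in $\overline\domD'$. Since $r_2$ is $\tst$-periodic, analytic in that strip, of size $\mathcal{O}(\|f\|_\domD)$, and has vanishing mean, shifting the Fourier contour by $\rho$ gives $|\hat r_{2,k}|\le C\|f\|_\domD e^{-2\pi|k|\rho/\tst}$; summing over $k\neq 0$ and accounting for the derivative in the fast phase (which carries the frequency $2\pi/\tst$) produces
\[
\|r_2\|_{\overline\domD'}=\mathcal{O}\!\left(\frac{\|f\|_\domD}{\tst}\exp\!\left(-\eta\frac{2\pi\delta}{\|f\|_\domD e\,\tst}\right)\right).
\]
The genuine difficulty is to make the analyticizing transformation rigorous: to show $\Xi$ exists, is analytic in $t$ on the whole strip, keeps the orbit inside the shrunken domain $|\Im(z-\overline y(t))|_\infty<\tilde r_y-\delta$ uniformly over all $N\sim 1/\tst$ iterations, and to track the constants $e$, $2\pi$ and $\frac{1}{1-\eta}$ cleanly through the optimization.
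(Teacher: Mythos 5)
The first thing to note is that the paper does not prove Theorem~\ref{BEA} at all: it is imported verbatim from the reference [moan06orm] (``the main result of~\cite{moan06orm} in the limit $\tst\rightarrow 0$''), and the Appendix proves only Lemma~\ref{extlem}, Corollary~\ref{coro1}, Lemma~\ref{lemmaana} and Theorem~\ref{theomod}. So there is no in-paper proof to match yours against; the most one can say is that your two-stage plan --- first a $C^\infty$, $\tst$-periodic interpolating field $\tilde f$ built from a flat path of maps joining the identity to $\Psi_{\tst,f}$, then an averaging transformation, trivial at the stroboscopic times, that renders the field analytic in $t$ --- is exactly the construction the paper attributes to \cite{moan06orm} in the paragraph preceding the theorem. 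The approach is therefore the right one, but what you have written is a roadmap, not a proof: the existence and uniform control of the transformation $\Xi$ over the $N\sim 1/\tst$ averaging steps, the confinement of the orbit to the shrunken domain $|\Im(z-\overline y(t))|_\infty<\tilde r_y-\delta$, and the derivation of the factor $\tfrac{2}{1-\eta}$ are all named as difficulties rather than carried out, and these are precisely where the content of the theorem lives.

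There is also one concrete flaw in the single quantitative argument you do give. Your Fourier contour shift establishes that an $\tst$-periodic, mean-zero function which is analytic and $\mathcal{O}(\|f\|_\domD)$ on the strip $|\Im\tau|<\rho$ is exponentially small \emph{on the real axis} (or on a strictly smaller strip), because there the nonzero modes carry the factor $e^{-2\pi|k|\rho/\tst}$. But the theorem asserts $\|r_2\|_{\overline\domD'}=\mathcal{O}\bigl(\tfrac{\|f\|_\domD}{\tst}\exp(-\eta\tfrac{2\pi\delta}{\|f\|_\domD e\tst})\bigr)$ with the supremum taken over the \emph{full} complex strip $|\Im(\tau-t)|<\eta\delta/(\|f\|_\domD e)$, i.e.\ exactly the width $\rho$ you shifted by; at the edge of that strip the $k=\pm 1$ mode is $\mathcal{O}(1)$ again, so the contour-shift argument cannot produce the stated bound with the \emph{same} $\eta$ appearing in both the domain and the exponent. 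The exponential smallness of $r_2$ on all of $\overline\domD'$ has to come from the optimally truncated averaging iteration itself (a Neishtadt-type estimate, with the remainder after $N\sim 1/\tst$ steps bounded by a geometrically shrinking sequence), not from a posteriori Fourier analysis of $r_2$. Supplying that iteration with uniform domain control is the missing core of the proof.
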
 

For a Hamiltonian vector field $f$ and a symplectic numerical
method~\cite{hairer97lob, hairer02gni, reich99bea}, the modified
vector field $\overline f$ is also Hamiltonian~\cite{calvo94mef,
  hairer02gni, leimkuhler05shd}, with Hamiltonian $\overline H = H(y)
+ G_1(y) + G_2(y,t)$ where $H$, $G_1$ and~$G_2$ are the Hamiltonians
corresponding to the vector fields $f$, $r_1$ and~$r_2$,
respectively. The change in the \emph{modified energy} along the
numerical trajectory therefore satisfies
\[
\frac{d}{dt}\overline H
= \{\overline H,\overline H\}+\frac{\partial}{\partial t}\overline H
= \frac{\partial}{\partial t}G_2,
\]
where $\{F,G\} := \sum_{j=1}^m F_{q_j}G_{p_j} - F_{p_j}G_{q_j}$ is the
Poisson bracket.  By Theorem \ref{BEA} this drift is very small for
small $\tst$, thus motivating symplectic methods.

\section{The method of Skeel \textsl{et al.} for 
  computing the modified energy}

Skeel and coworkers~\cite{engle05med, skeel01pco} found an ingenious
way of evaluating the modified energy $\overline H$ at the points
$\{x_n\}$ for discretizations based on splittings~\cite{blanes02psr}.
Suppose we have an Hamiltonian given by $H = \frac12 p^T M^{-1} p +
U(q)$. An explicit splitting algorithm with step size $h$ is given by
\begin{equation}
  \label{splittmeth}
  \begin{split}
    & \text{for } n = 0,1,2,... \\
    & \quad \hat p_0 = p_n,\quad \hat q_0=q_n \\
    & \quad \text{for } s = 1:S \\
    & \quad\quad \hat p_s = \hat p_{s-1} - \tst a_s U_q(\hat q_{s-1}) \\
    & \quad\quad \hat q_s = \hat q_{s-1} + \tst b_s M^{-1}\hat p_{s} \\
    & \quad \text{end} \\
    & \quad p_{n+1} = \hat p_S, \quad q_{n+1}=\hat q_S \\
    & \text{end} 
  \end{split}
\end{equation}
leading to approximations $p_{n+1} = \hat p_S$, $q_{n+1} = \hat q_S$
when $\hat q_0 = q_n$, $\hat p_0 = p_n$ at $t_n = n\tst$. By choosing
the coefficients $a_s$,~$b_s$ appropriately, a method of arbitrary high
order can be found. The modified Hamiltonian can be found by
representing the inner loop of (\ref{splittmeth}) as a concatenation
of exponential operators~\cite{hairer02gni}
\begin{multline*}
  \Psi_{h,f}(p,q) = 
\exp(-ha_1 U_q\partial_p)(p,q)\exp(hb_1 M^{-1}p\partial_q)
 \cdots \\
\exp(-ha_S U_q\partial_p)\exp(hb_S M^{-1}p\partial_q)  , 
\end{multline*}
whereby the Baker--Campbell--Hausdorff (BCH) formula is used to find an
expression so that $\Psi_{h,f}(x) \simeq \exp(h\overline f\partial)(x)$.

The approach of Skeel \textsl{et al.} for computing values of the
modified energy is to append one scalar equation to the numerical
integrator,
\begin{equation}
  \label{splittmeth2}
  \begin{split}
    & \text{for } n = 0,1,2,\dots \\
    & \quad \hat p_0 = p_n, \quad \hat q_0=q_n, 
    \quad \hat\beta_0 = \beta_n \\
    & \quad \text{for } s = 1:S \\
    & \quad\quad \hat p_s = \hat p_{s-1} - \tst a_s U_q(\hat q_{s-1}) \\
    & \quad\quad \hat\beta_s = \hat\beta_{s-1} 
    - \tst a_s(\hat q_{s-1}^T U_q(\hat q_{s-1}) + 2U(\hat q_{s-1})) \\
    & \quad\quad \hat q_s = \hat q_{s-1} + \tst b_s M^{-1}\hat p_{s} \\
    & \quad \text{end} \\
    & \quad p_{n+1} = \hat p_S, \quad q_{n+1} = \hat q_S,
    \quad \beta_{n+1} = \hat\beta_S \\
    & \text{end}
  \end{split}
\end{equation}
where $\beta_0 = 0$. To understand how the modified energy can be
recovered from $\{p_n,q_n,\beta_n\}$, note that by Theorem~\ref{BEA}
the numerical trajectory $(p_n,q_n)$ is exactly interpolated by the
flow of a Hamiltonian $\overline H$. The
discretization~(\ref{splittmeth2}) is the discretization of $H_\alpha
= \alpha^2H(\alpha^{-1}p, \alpha^{-1}q)$ where $\beta$ is conjugate to
$\alpha$, the so-called \emph{homogeneous extension} of $H(p,q)$. The
discovery in~\cite{skeel01pco} rests on the fact that homogeneous
extension is a Lie algebra homeomorphism. Thus, since $\overline H$ is
constructed by Poisson brackets as in the BCH formula, the modified
Hamiltonian for (\ref{splittmeth2}), $\overline H_\alpha$, is the
homogeneous extension of $\overline H$, hence the trajectory generated
by (\ref{splittmeth2}) is interpolated by
\begin{align*}
\overline q' &= \overline H_{\overline p}(\overline p,\overline q,t), \\
\overline p' &= -\overline H_{\overline q}(\overline p,\overline q,t), \\
\overline \beta' 
&= \overline q^T \overline H_{\overline q}(\overline p,\overline q,t)
+ \overline p^T\overline H_{\overline p}(\overline p,\overline q,t)
- 2\overline H(\overline p,\overline q,t),
\end{align*}
from which 
\begin{equation}
  \overline H = \frac12 (\overline q^T\overline H_{\overline q}
  + \overline p^T\overline H_{\overline p}-\overline \beta')
  = \frac12 (-\overline q^T \overline p' + \overline p^T \overline q'
  - \overline \beta'). \label{skModEn}
\end{equation} The equation for  $\alpha$ is removed from (\ref{skModEn}) since $\overline H_\alpha$ does not depend on $\beta$, the conjugate variable of $\alpha$,
and hence $\alpha'=0$ which is solved exactly by the methods we are considering.

Thus the value of $\overline H$ can be computed by finding the
derivatives of the interpolating trajectory (which are not known since
we do not have $\overline H_p,\overline H_q$). In~\cite{engle05med,
  skeel01pco} estimates of the derivatives are computed using backward difference formulas and 
interpolating polynomials with stored values of $\{p_n,q_n,\beta_n\}$. 
These polynomials can be precomputed, but unfortunately the required
expressions are very large, and they only provide expressions up to order 24. Their method does however have an advantage in requiring less stored values than one based on centered differences, which might be important if the modified energy is part of the simulation~\cite{engle05med}.

\section{Richardson extrapolation}
 
Our suggestion is to use Richardson extrapolation in order to avoid
the large expressions that arise in the method described in the
previous section.

First consider the use of Richardson extrapolation to find the
derivative of a function, say~$y$, at zero given the function values
on a grid. We define the central difference approximations
\[
T_{j,1} = \frac{y(jh) - y(-jh)}{2jh}, \quad j=1,\ldots,
\]
and compute the Richardson table entries
\[
T_{j,k+1} = T_{j,k} + \frac{T_{j,k}-T_{j-1,k}}{(1-k/j)^2-1},
\quad k=1,\ldots,j-1.
\]
We then have by standard results that $T_{j,j} = y'(0) +
\mathcal{O}(\tst^{2j})$. In fact, it is straightforward to prove by
induction that the $T_{j,k}$ satisfy
\[
T_{j,k} = \sum_{i=1}^k \frac{2 \, (-1)^{i+1} \, (j)_k^2}%
{(i-1)! \, (k-i)! \, (2j-k+i)_k \, (j-k+i)} T_{j-k+i,1}
\]
where the Pochhammer symbol denotes the falling factorial: 
\[
(n)_k = n (n-1) (n-2) \ldots (n-k+1) = \frac{n!}{k!}.
\]
It follows that the diagonal entries in the Richardson table are given
by $T_{m,m} = D_my(0)$ where $D_m(y)$ denotes the central-difference
approximation to the derivative~$y'(0)$ using $2m$~points, defined by
\begin{equation}
D_my(0) = \sum_{j=1}^m \frac{(-1)^{j+1} (m!)^2}%
  {j\tst(m-j)!(m+j)!} \bigl( y(j\tst) - [y(-j\tst) \bigr).  \label{cdiff}
\end{equation}
This approximation satisfies~$D_my(0) = y'(0) + \mathcal{O}(h^{2m})$.
By choosing the index~$m$ appropriately, it is possible to find an
exponentially accurate approximation for the derivative of analytic
functions, as stated in the following lemma.

\begin{lemma}\label{extlem} 
  Let $y(t)$ be analytic in $\{t\in \CC:|\Im(t)|<\rho\}$, then there
  exists an $m^*$ (which depends on~$h$) and a constant $C_1>0$ such that
  \[
  |y'(0)-D_{m^*}y(0)| 
  \leq C_1 \frac{\rho \exp\left(-\frac{\pi\rho}{\tst}\right)}{\tst^2} 
  \|y\|_\rho
  \] 
where $\|y\|_\rho=\sup_{|\Im(\tau )|< \rho} |y(\tau)|_\infty$.
\end{lemma}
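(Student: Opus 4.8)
The plan is to turn the finite-difference weights into a single closed contour integral via the Hermite interpolation remainder, and then read off the exponentially small factor from the width~$\rho$ of the analyticity strip. Let $\omega_m(z)=\prod_{j=1}^m(z^2-j^2\tst^2)$ be the nodal polynomial of the $2m$ symmetric nodes $\pm\tst,\dots,\pm m\tst$; the $2m$-point weights in~(\ref{cdiff}) are exactly those of $P'(0)$, where $P$ is the degree-$(2m-1)$ polynomial interpolating $y$ at these nodes. Because $\omega_m$ is even we have $\omega_m'(0)=0$ and $\omega_m(0)=(-1)^m(m!)^2\tst^{2m}$, so differentiating the remainder formula $y(t)-P(t)=\frac{1}{2\pi i}\oint_\Gamma\frac{\omega_m(t)}{\omega_m(z)}\frac{y(z)}{z-t}\,dz$ in~$t$ and setting $t=0$ collapses to
\[
y'(0)-D_m y(0)=\frac{\omega_m(0)}{2\pi i}\oint_\Gamma\frac{y(z)}{z^2\,\omega_m(z)}\,dz,
\]
with $\Gamma$ any positively oriented loop around $0$ and the nodes. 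I would check this against $m=1$ by residues, where it returns the central difference.

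Next I would push $\Gamma$ out to the two lines $\Im z=\pm\rho$ bounding the strip: the rational factor decays like $|z|^{-(2m+2)}$, so the pieces at $\Re z=\pm\infty$ drop out and no singularity of~$y$ is met. The decisive identity, with $\zeta=z/\tst$, is
\[
\frac{\omega_m(0)}{\omega_m(z)}
=\frac{\pi\zeta}{\sin(\pi\zeta)}\,
\frac{\Gamma(m+1)^2}{\Gamma(m+1-\zeta)\,\Gamma(m+1+\zeta)},
\]
which follows from $\sin(\pi\zeta)/(\pi\zeta)=\prod_{j\ge1}(1-\zeta^2/j^2)$ together with the Gamma-function form of the truncated product. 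On $\Im z=\pm\rho$ one has $|\sin(\pi\zeta)|^2=\sin^2(\pi x/\tst)+\sinh^2(\pi\rho/\tst)\ge\sinh^2(\pi\rho/\tst)$, and $1/\sinh(\pi\rho/\tst)=2\exp(-\pi\rho/\tst)/(1-\exp(-2\pi\rho/\tst))$ produces precisely the claimed factor $\exp(-\pi\rho/\tst)$.

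Collecting terms, and using that the $\pi\zeta$ in the numerator cancels one power of $|z|=\tst|\zeta|$ against the $z^{-2}$, the error is at most $\|y\|_\rho$ times $\frac{1}{\tst\,\sinh(\pi\rho/\tst)}$ times $\int_{\RR}\frac{|G(x)|}{\sqrt{x^2+\rho^2}}\,dx$, where $G$ is the Gamma ratio above evaluated at $z=x+i\rho$. The hard part is this last factor together with the selection of $m=m^*$. The ratio $G$ tends to $1$ as $m\to\infty$ for fixed $\zeta$, but it is not uniformly close to $1$ once $\Im\zeta=\rho/\tst$ becomes comparable to~$m$: a Stirling expansion gives $G\approx\exp\bigl((\rho/\tst)^2/m\bigr)$ near $x=0$, whereas in the far field $G$ decays like $|x|^{-2m}$ and renders the integral convergent. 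Choosing $m^*$ to grow with $\rho/\tst$ fast enough (for instance $m^*\sim(\rho/\tst)^2$) keeps the correction near $x=0$ bounded by a constant, leaves the integral $O(1)$ up to a factor $\log(\rho/\tst)$, and lets the surviving algebraic factors be absorbed into $C_1\rho/\tst^2$. Thus the genuine obstacle is this uniform Stirling control of $G$ along the whole line, balanced against the far-field decay, and the attendant optimization of $m^*$; once that is in place the exponential factor $\exp(-\pi\rho/\tst)$ is immediate from the $\sin$-factorization.
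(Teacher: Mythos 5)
Your argument is essentially the paper's: the same contour-integral representation of the error $y'(0)-D_m y(0)$ with the same kernel $\omega_m(0)/(z^2\omega_m(z))$ (the paper's $K_m$, obtained there by summing residues rather than via the interpolation remainder), the same $\sin$/$\sinh$ product identity producing the factor $\exp(-\pi\rho/\tst)$, the same tail correction $\exp\bigl(\rho^2/(\tst^2 m)\bigr)$ for the truncated product, and the same optimal choice $m^*\sim\rho^2/\tst^2$. The only differences are presentational: you integrate over the infinite lines $\Im z=\pm\rho$ and write the truncated product as a Gamma ratio, whereas the paper uses a finite closed contour of length $2\pi\rho+4m\tst$ and bounds the kernel by its maximum modulus at $z=i\rho$.
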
 
The proof of this result and other results are found in the Appendix.

\section{Computing the modified energy using 
  Richardson extrapolation}
 
Returning to the computation of the modified energy~\eqref{skModEn},
the derivatives in this formula can be computed with Richardson
extrapolation using the stored values of~$\{p_n,q_n,\beta_n\}$. To
compute the modified energy at $t = n\tst$, we define the central
difference approximation
\begin{align*}
  T_{j,1}^n &= \frac 12 \left(
    -\overline q^T(n\tst) \,
    \frac{\overline p((n+j)\tst)-\overline p((n-j)\tst)}{2j\tst}\right. \\
  &\qquad + \left.
    \overline p^T(n\tst) \,
    \frac{\overline q((n+j)\tst)-\overline q((n-j)\tst)}{2j\tst}
    -\frac{\overline \beta((n+j)\tst)-\overline \beta((n-j)\tst)}{2j\tst}\right) \\
  &= \frac 12 \left( -q^T_n \frac{ p_{n+j}-p_{n-j}}{2j\tst}
    + p^T_n \frac{q_{n+j}-q_{n-j}}{2j\tst} 
    - \frac{\beta_{n+j}-\beta_{n-j}}{2j\tst} \right)
\end{align*}
for $j=1,\ldots$ and then compute the Richardson table entries as before:
\begin{equation}
  T_{j,k+1}^n = T_{j,k}^n + \frac{T_{j,k}^n-T_{j-1,k}^n}{(1-k/j)^2-1},
  \quad k=1,\ldots,j-1, \label{Rich}
\end{equation}
The expression~$T_{j,j}^n$ is a convenient way of computing the
approximations and in addition it gives a way of estimating the error
in the approximation $|\overline H -T_{j-1,j-1}| \approx
|T_{j,j}-T_{j-1,j-1}|$, which is useful for finding a stopping
criterion for the extrapolation process.

We mention in passing that accurate values of $\overline H$ might be
obtained using Fourier series as well~\cite{benettin99fhp}, however
such methods seem most useful for quasi-periodic motions or scattering
problems, while the approach taken here seems suitable for a broader
range of problems.
 
The following corollary follows from Lemma~\ref{extlem}.

\begin{corollary}\label{coro1} 
  Let $p_n,q_n,\beta_n$ be given by the numerical scheme then there
  exists an $m^*$ such that
  \[
  |\overline H(p_n,q_n,t_n)-T_{m^*,m^*}^n| \leq 
  \frac{C_1}2 \frac{\rho\exp\left(-\frac{\pi\rho}{\tst}\right)}{\tst^2}  
  \bigl( |q_n|_1 \|\overline p\|_\rho + |p_n|_1 \|\overline q\|_\rho 
  + \|\overline \beta\|_\rho \bigr),
  \]
  where $\rho$ is such that the interpolating trajectory $(\overline
  p(t),\overline q(t),\overline \beta(t))$ is analytic for $|\Im(t)| <
  \rho$.
\end{corollary}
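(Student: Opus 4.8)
The plan is to reduce the corollary to Lemma~\ref{extlem} by exploiting the linearity of the Richardson scheme in the sampled function values. First I would record that the recursion~\eqref{Rich} is linear in the entries $T_{j,1}^n$, and that each $T_{j,1}^n$ is one and the same linear combination — with the \emph{fixed} coefficient vectors $-\tfrac12 q_n^T$, $\tfrac12 p_n^T$ and the scalar $-\tfrac12$ — of the three central-difference quotients $\tfrac{\overline p((n\pm j)\tst)}{2j\tst}$, $\tfrac{\overline q((n\pm j)\tst)}{2j\tst}$, $\tfrac{\overline\beta((n\pm j)\tst)}{2j\tst}$. Since these prefactors do not depend on $j$, they commute with the extrapolation, so the diagonal identity $T_{m,m}=D_m y(0)$ stated before the lemma applies term by term and yields
\[
T_{m^*,m^*}^n = \tfrac12\bigl(-q_n^T D_{m^*}\overline p(t_n) + p_n^T D_{m^*}\overline q(t_n) - D_{m^*}\overline\beta(t_n)\bigr),
\]
where $D_{m^*}$ now denotes the $2m^*$-point central-difference operator~\eqref{cdiff} centered at $t_n$.

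Next I would put the exact modified energy in the matching form. By the interpolation property of Theorem~\ref{BEA} one has $\overline p(t_n)=p_n$ and $\overline q(t_n)=q_n$, so~\eqref{skModEn} reads
\[
\overline H(p_n,q_n,t_n) = \tfrac12\bigl(-q_n^T \overline p'(t_n) + p_n^T \overline q'(t_n) - \overline\beta'(t_n)\bigr).
\]
Subtracting, $\overline H - T_{m^*,m^*}^n$ is the same linear combination of the three derivative errors $\overline p'(t_n)-D_{m^*}\overline p(t_n)$, $\overline q'(t_n)-D_{m^*}\overline q(t_n)$ and $\overline\beta'(t_n)-D_{m^*}\overline\beta(t_n)$. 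I would then bound the two vector inner products by H\"older's inequality in the $\ell^1/\ell^\infty$ pairing, $|q_n^T v|\le |q_n|_1\,|v|_\infty$, and apply Lemma~\ref{extlem} coordinatewise to each component of $\overline p$, $\overline q$ and to $\overline\beta$. Collecting the three contributions with the common factor $\tfrac{C_1}{2}\,\frac{\rho\exp(-\pi\rho/\tst)}{\tst^2}$ produces exactly the stated estimate.

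The point needing care — the main obstacle — is that Lemma~\ref{extlem} supplies an $m^*$ depending on $\tst$ (and on $\rho$), so I must check that a \emph{single} $m^*$ serves simultaneously for every coordinate of the three trajectories. This is legitimate because the strip half-width $\rho$ is common to $\overline p$, $\overline q$, $\overline\beta$ — it is the radius of analyticity of the interpolating trajectory appearing in the hypothesis — and both $m^*$ and the universal constant $C_1$ furnished by the lemma depend only on $\rho$ and $\tst$, not on the particular analytic function; hence one choice of $m^*$ covers all components at once. A secondary technicality is that each $T_{j,1}^n$ is the central difference centered at the \emph{shifted} origin $t_n$ rather than at $0$, so I would note that the lemma transfers verbatim under the real translation $t\mapsto t_n+t$, which maps the strip $|\Im(t)|<\rho$ to itself and leaves $\|\cdot\|_\rho$ unchanged.
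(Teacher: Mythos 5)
Your proposal is correct and follows essentially the same route as the paper: decompose $\overline H - T^n_{m^*,m^*}$ into the three derivative errors, bound the inner products via the $\ell^1/\ell^\infty$ pairing, and invoke Lemma~\ref{extlem} componentwise with a common $m^*$. The only difference is that you spell out details the paper leaves implicit (linearity of the extrapolation, the single choice of $m^*$, and the translation to $t_n$), all of which are handled correctly.
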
 

In Corollary \ref{coro1} the parameter $\rho$ related to the domain of
analyticity of $\overline y(t)$ is undetermined. The following
existence lemma will be useful for bounding $\rho$.

\begin{lemma}[Domain of analyticity of the solution] \label{lemmaana}
  Let $g(y,t)$ be an analytic vector field on the domain
  \begin{align*}
    (z,\tau) & \in \domD_y\times\domD_t \\
    \domD_y & =\{z\in \CC^d : |z-x_n|_\infty < r_y\} \\
    \domD_t & =\{t\in \CC^d : |\Im(t)| < r_t\}
  \end{align*}
  Then $y(t)$ satisfying $y'=g(y,t)$, $y(0)=x_n\in \RR^d$ is an
  analytic function of $t$ on the domain
  \[
  \domD = \left\{t\in \CC:|t| < \min \left( \frac{r_y}{\|g\|_r}, r_t \right) \right\},
  \]
  where $\|g\|_r:=\sup_{(z,t)\in \domD_y\times \domD_t}|g(z,t)|_\infty$.
\end{lemma}

We can now combine the estimates of Corollary~\ref{coro1} and
Lemma~\ref{lemmaana} to determine a bound on $\rho$, and hence on the
error in the numerically computed modified energy.
\begin{theorem}[Numerical modified energy]\label{theomod} 
  Let $H(p,q)$ be analytic in its arguments, and let $p_n,q_n,\beta_n$
  be computed by the algorithm (\ref{splittmeth2}). Then for each $n$
  there exists an~$m^*$ such that we have the error bound
  \[
  |\overline H(p_n,q_n,t_n)-T_{m^*,m^*}^n| 
  \leq \frac{C_1}{2\tst^2} 
  \exp\left(-C_2 \frac{\delta}{\tst \|f\|_\domD}\right)
  \bigl( |q_n| \|\overline p\|_\rho + |p_n| \|\overline q\|_\rho
  + \|\overline\beta\|_\rho),
  \] 
  where $C_2<2.14707$ (and $\rho=0.6835\delta/\|f\|_\domD$).
\end{theorem}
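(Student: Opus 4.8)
The plan is to feed the analyticity estimate of Theorem~\ref{BEA} into Lemma~\ref{lemmaana}, and then the resulting strip width into Corollary~\ref{coro1}. Corollary~\ref{coro1} already delivers a bound of exactly the advertised shape: a prefactor $\frac{C_1}{2\tst^2}$ times $\exp(-\pi\rho/\tst)$ times the sum of norms. The only quantity left undetermined there is $\rho$, the half-width of the strip on which the interpolating trajectory $(\overline p,\overline q,\overline\beta)$ is analytic. Hence the whole content of the theorem is to produce a quantitative lower bound on $\rho$ in terms of the data $\delta$ and $\|f\|_\domD$, and then to rewrite $\exp(-\pi\rho/\tst)$ as $\exp(-C_2\,\delta/(\tst\|f\|_\domD))$.

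First I would apply Theorem~\ref{BEA} to read off the domain $\overline\domD'$ of the modified field $\overline f$: it is analytic in space for $|\Im(z-\overline y(t))|_\infty < \tilde r_y-\delta$ and in time for $|\Im(\tau-t)| < \eta\delta/(\|f\|_\domD e)$, and it obeys $\|\overline f\|_{\overline\domD'} \le \frac{2}{1-\eta}\|f\|_\domD$. Next I would invoke Lemma~\ref{lemmaana} for the modified ODE $\overline y' = \overline f(\overline y,t)$, taking $g=\overline f$, spatial radius $r_y = \tilde r_y-\delta$, temporal radius $r_t = \eta\delta/(\|f\|_\domD e)$, and the upper bound $\|g\|_r \le \frac{2}{1-\eta}\|f\|_\domD$ (an upper bound on $\|g\|_r$ being exactly what gives a lower bound on $\rho$). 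This yields analyticity of $\overline y$ on a disc of radius $\min(r_y/\|g\|_r,\,r_t)$. Because Corollary~\ref{coro1} needs a strip and not a disc, I would re-centre this argument at each real base point $(n\pm j)\tst$ — legitimate precisely because $\overline f$ is $\tst$-periodic in $t$, so $r_y$, $r_t$ and $\|g\|_r$ are uniform along the real axis — and take the union of the discs, which is the strip $|\Im t| < \rho$ with $\rho \ge \min\big(r_y(1-\eta)/(2\|f\|_\domD),\,r_t\big)$. Substituting this $\rho$ and the uniform norm bounds into Corollary~\ref{coro1} produces a bound of the claimed form.

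The remaining work, and the main obstacle, is the optimisation that turns $\min\!\big((\tilde r_y-\delta)(1-\eta)/(2\|f\|_\domD),\,\eta\delta/(\|f\|_\domD e)\big)$ into a clean multiple of $\delta/\|f\|_\domD$, together with the careful bookkeeping of the exponent. I would treat $\eta\in(0,1)$ as a free parameter; since $\tilde r_y$ is generically not small relative to $\delta$, one can keep the spatial term from binding and let the temporal term $\eta\delta/(\|f\|_\domD e)$ govern the strip, and then optimise $\eta$. Here the factor $e$ and the constant $2/(1-\eta)$ are exactly what pin down $C_2$, and one must be careful whether the half-width or the full width of the analyticity strip is what enters the exponent of Lemma~\ref{extlem}; tracking this faithfully is what produces the $2\pi/e$-scale constant and the stated values $\rho = 0.6835\,\delta/\|f\|_\domD$ and $C_2 < 2.14707$. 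The slowly varying factors — the explicit $\rho$ appearing in the prefactor of Corollary~\ref{coro1}, and the shift of the norm index caused by evaluating $\overline p,\overline q$ at $n\tst$ while differencing at $(n\pm j)\tst$ — are bounded and can be absorbed into $C_1$.

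Beyond the constant-chasing, the delicate structural checks I would watch are: (i) that $\rho$ is genuinely uniform in $n$, for which the $\tst$-periodicity of $\overline f$ is essential; and (ii) that the central-difference stencil defining $T^n_{m^*,m^*}$ only samples $\overline y$ at points whose analytic continuation is covered by the established strip, i.e. that the optimal $m^*$ from Lemma~\ref{extlem} does not push $j\tst$ past the flow-time radius $r_y/\|g\|_r$. I expect the optimisation of the exponent, not these structural checks, to be where the real effort lies.
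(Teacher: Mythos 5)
Your proposal follows the paper's proof essentially verbatim: take $g=\overline f$ in Lemma~\ref{lemmaana} with the domain and norm bound supplied by Theorem~\ref{BEA}, obtain the strip of analyticity $|\Im\tau|<\min\bigl((\tilde r_y-\delta)(1-\eta)/(\tfrac{2}{1-\eta}\|f\|_\domD)^{0},\,\eta\delta/(e\|f\|_\domD)\bigr)$ --- more precisely $\min\bigl(\tfrac{(\tilde r_y-\delta)(1-\eta)}{2\|f\|_\domD},\tfrac{\eta\delta}{e\|f\|_\domD}\bigr)$ --- for $\overline y$, optimise the free parameters to reach $\rho=0.6835\,\delta/\|f\|_\domD$, and substitute into Corollary~\ref{coro1}. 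The one place you go beyond the paper, re-centring the disc of Lemma~\ref{lemmaana} at each grid point and invoking periodicity to pass to a uniform strip, is a detail the paper silently elides, and the optimisation you defer is exactly the computation the paper performs to obtain $C_2=0.6834\pi<2.14707$.
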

The error bound in Theorem \ref{theomod} shows that we are able to
track the modified energy exponentially accurately. Moreover the bound
displays the same dependency on the parameters $h$, $\tilde r_y$ and
$\|f\|_\domD$ as Theorem \ref{BEA}. 

The bounding-constant~$C_2$ is however smaller than the $2\pi/e$ found
in the proof of Theorem~\ref{BEA}. It is unclear to us if this is due
to the proof techniques applied, or if it is an actual weakness of the
extrapolation method when applied to estimate the derivatives and thus
the modified energy.

\section{Numerical experiments}

We have implemented the extrapolation algorithm using the Arprec
multiple-precision library in order to avoid pollution by round-off
errors and to be able to verify the theory to high accuracy.%
\footnote{The Arprec library is available from
  \url{http://crd.lbl.gov/~dhbailey/mpdist/}. The C++ source code for our
  experiments can be downloaded from
  \url{http://www1.maths.leeds.ac.uk/~jitse/software.html}}  
We set the precision to 120~digits. Most experiments are done using
the standard St\"ormer--Verlet scheme (also known as the leap frog
scheme). 
All the experiments were also repeated with two fourth-order splitting schemes
to check for dependence on splitting scheme coefficients. No noteworthy dependence was found,
and we only present these results for the Kepler experiment.


An early experiment verifying the exponentially small drift in
modified energy was done by Benettin and
Giorgilli~\cite{benettin94ohi} who used a Hamiltonian of the form $H =
\frac12(p_1^2+p_2^2) + U(q_1^2+q_2^2)$ with the potential function~$U$
vanishing fast as its argument becomes large. In this case, the
exponentially small effects can be observed directly because methods
of the form~(\ref{splittmeth}) preserve the energy~$H$ exactly when
$U$ is identically zero. To carry out this experiment the initial
values $y_0=(p_1(0),p_2(0),q_1(0),q_2(0))$ are then chosen so that $U$
vanishes and that the trajectory passes close to $(q_1,q_2)=(0,0)$
before ending at some point $y_T=(p_1(T),p_2(T),q_1(T),q_2(T))$ where
again $U$ vanishes. Carrying out that simulation the difference
$|H(y_0)-H(y_T)|$ is observed to be ${\mathcal O}(\exp(-C/h))$ where
$C$ is some unspecified constant.

We repeated this experiment using our method, and found that in this
case it had \emph{zero error} so the experiments we consider will not
have this type of Hamiltonian.  This matter warrants further
investigations, but we have not pursued these in this paper.

\subsection{The pendulum}

\begin{figure}
  \centering
  \includegraphics[width=12cm]{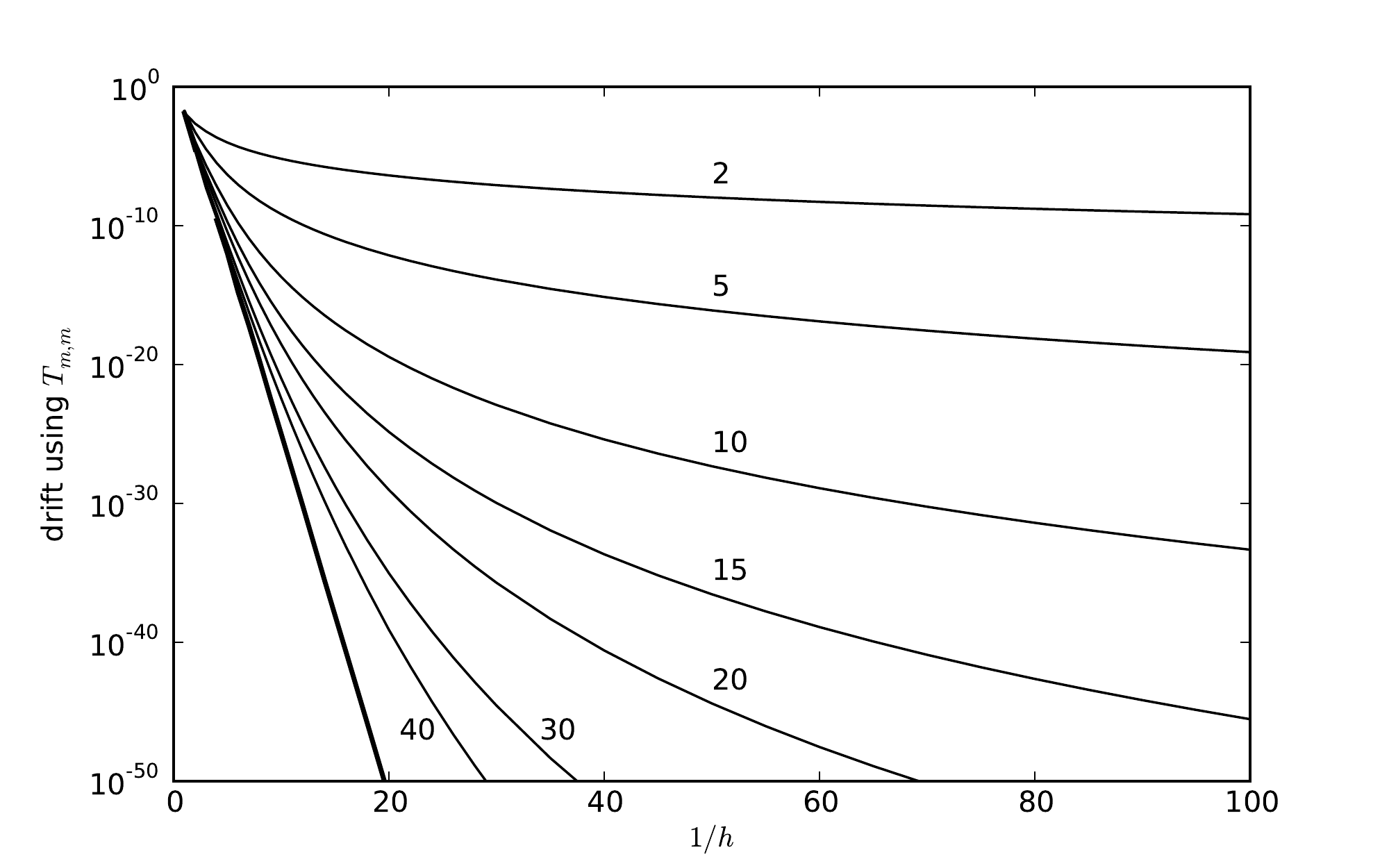}
  \caption{Drift in modified energy for the pendulum as a function of
    step size~$h$ for various values of~$m$. The thick line indicates
    the limit.}
  \label{fig3}
\end{figure} 

In this experiment we apply the St\"ormer--Verlet method to the
pendulum, which has Hamiltonian $H = p^2/2 - \cos(q)$, integrated over
the time interval $[0,100]$. Figure~\ref{fig3} reports the drift in
the modified energy computed using~$T_{m,m}$ for $m=2, 5, 10, 15, 20,
30, 40$. Here, and in the other plots, the drift is defined as the
difference between the maximum of the modified energy over the
integration interval and its minimum. The figure suggests that the
approximations~$T_{m,m}$ converge for this problem. The limit is
indicated by the thicker line in the left part of the plot, which
shows that the drift follows the $\exp(-c/h)$ behaviour predicted by
the theory.

\begin{figure}
  \centering
  \includegraphics[width=12cm]{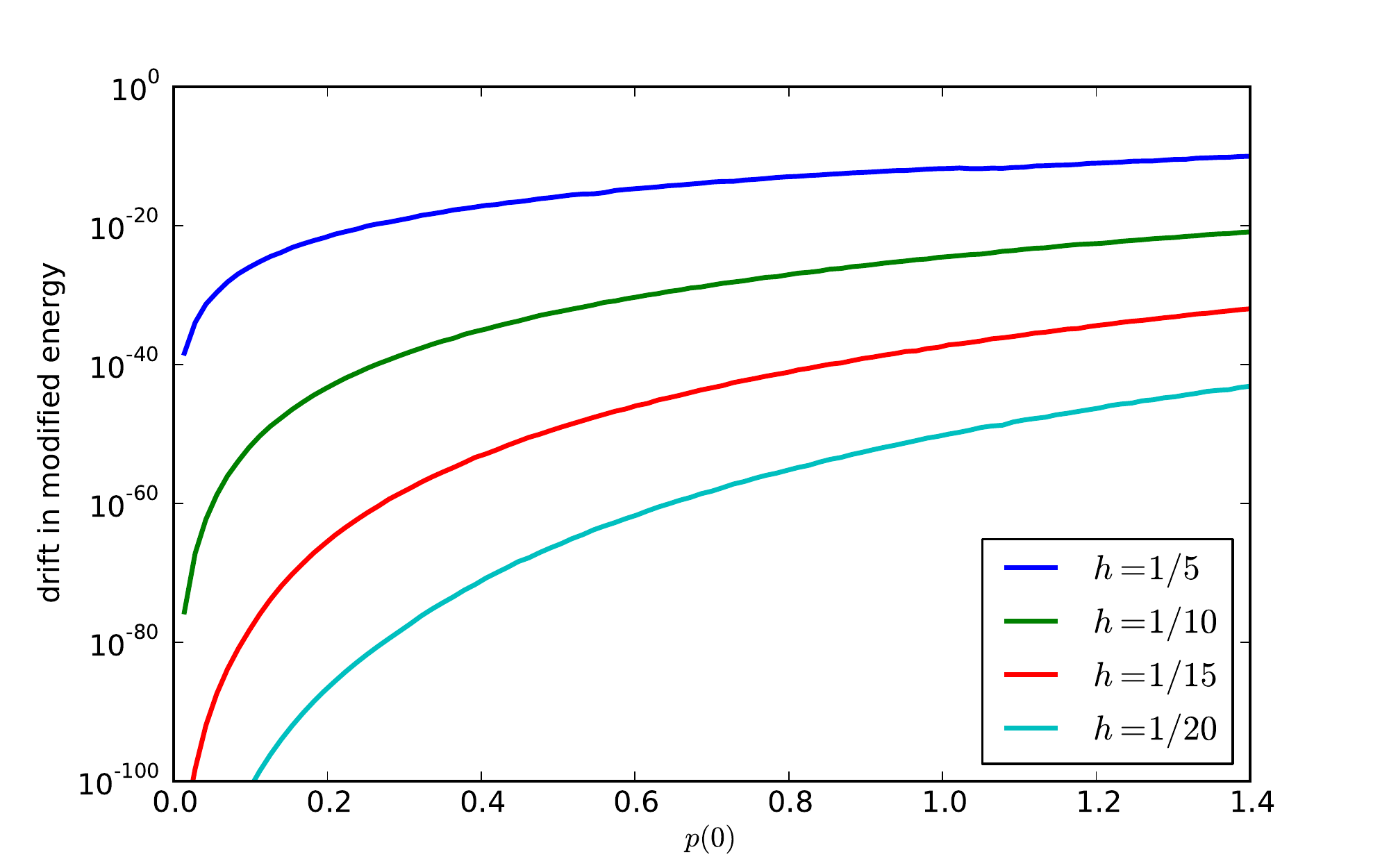}
  \caption{Drift in modified energy for the pendulum as a function of
    the initial momentum~$p(0)$.}
  \label{fig4}
\end{figure} 

The initial value for the experiment reported in Figure~\ref{fig3} is
$q(0)=0$ and $p(0) = 1$. Next we study the effect of varying the
initial condition. The result is shown in Figure~\ref{fig4}. The
St\"ormer--Verlet method shows improved energy preservation near the
equilibrium point, revealing the $\exp(-c/h\|f\|_\domD)$ dependency on
step size and on~$\|f\|_\domD$ which decreases as $p\rightarrow 0$.

\subsection{Kepler problem}

The Kepler problem for one particle in a central force field is given
by the Hamiltonian
\[
H = \frac 1 2 (p_1^2+p_2^2)-\frac 1 {\sqrt{q_1^2+q_2^2}}.
\]
We integrate over the interval~$[0,100]$ starting from the point
\begin{align*}
  p_1 &= 0,   & q_1 &= 1-ecc, \\
  p_2 &= \sqrt{\frac{1+ecc}{1-ecc}}, &q_2 &= 0.
\end{align*}
where $0\leq ecc < 1$ is the eccentricity of the orbit.

\begin{figure}
  \includegraphics[width=0.5\linewidth]{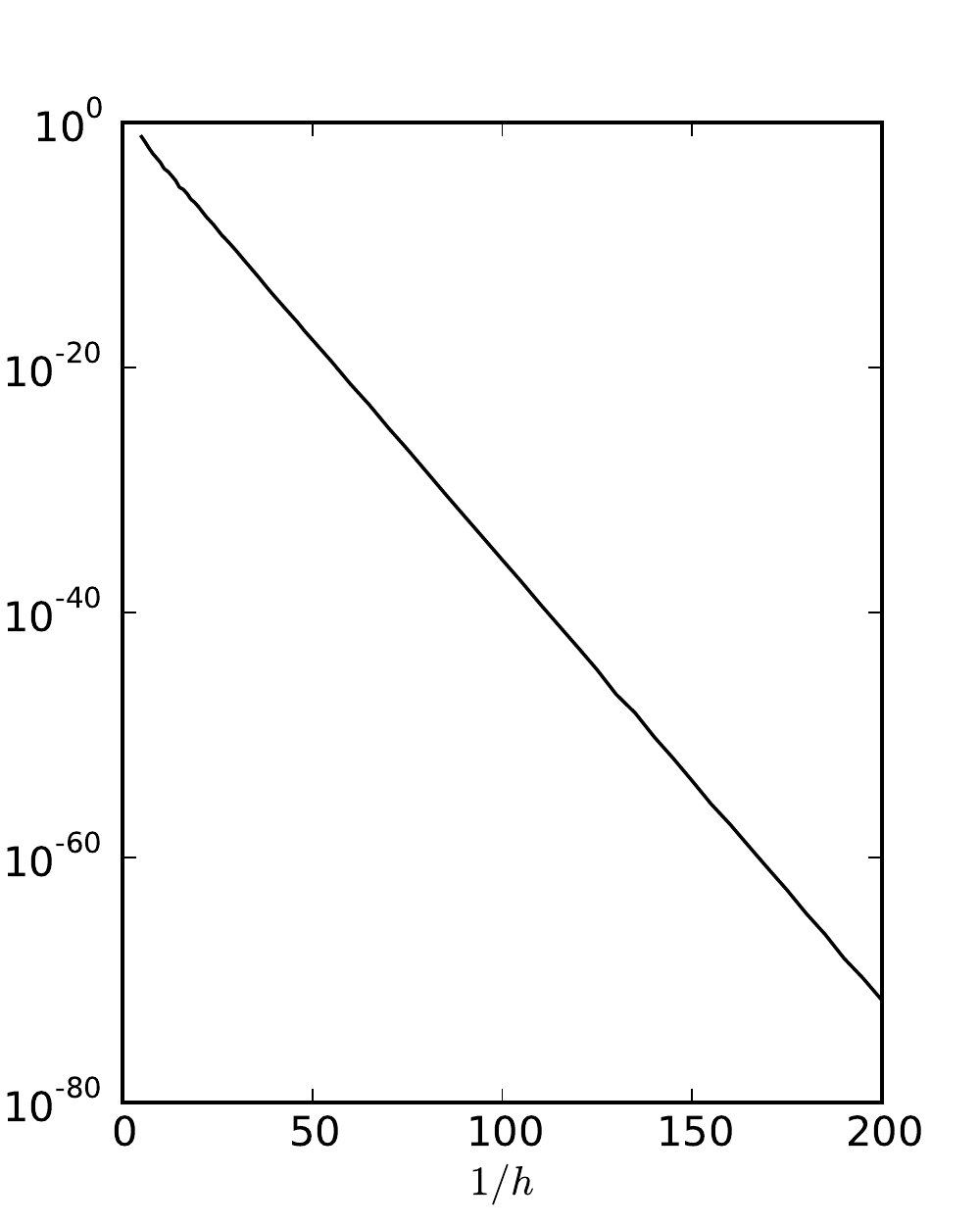}%
  \includegraphics[width=0.5\linewidth]{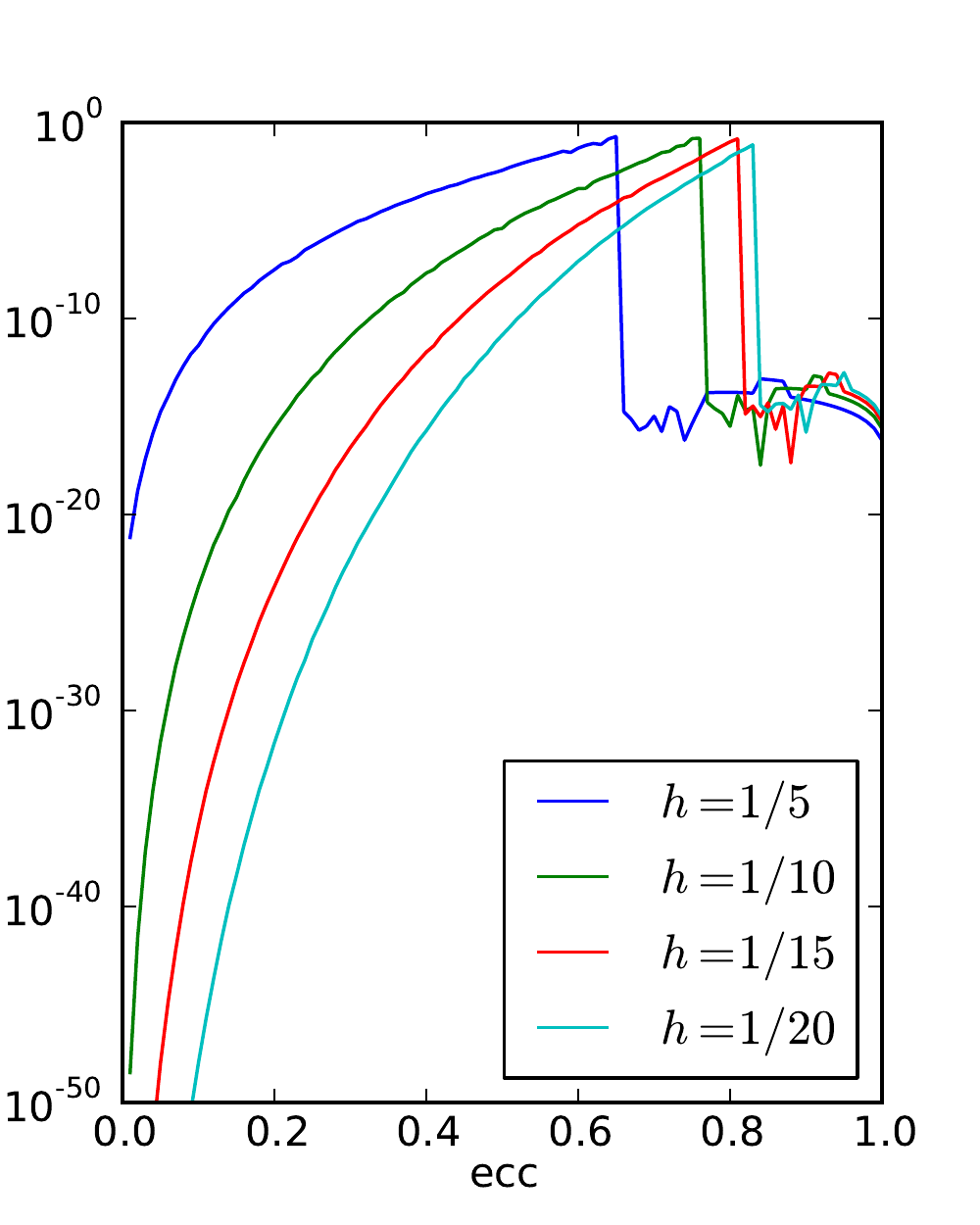}
  \caption{Drift in modified energy for the Kepler problem as a
    function of step size~$h$ for eccentricity $ecc=0.6$ (left plot),
    and as a function of eccentricity (right plot).}
  \label{figkepler}
\end{figure}

The left plot of Figure~\ref{figkepler} shows the theoretical
$\exp(-c/h)$ behavior. In contrast with the pendulum, for this problem
the $T_{m,m}$ do not converge as $m \to \infty$, but the sequence has
to be truncated at a suitably chosen point. To find the optimal~$m$,
we approximate the error in the $m$-th estimate as
\begin{equation}
|\overline H - T_{m,m}| \approx |T_{m,m} - T_{m-1,m-1}|. \label{errest}
\end{equation}
We compute this estimate for $m=2,3,\ldots,200$ and select the value
of~$m$ for which the estimated error is minimized. This procedure
recovers the expected exponential behaviour.

The right plot of Figure~\ref{figkepler} shows the dependence of the
drift in the modified energy on the eccentricity of the orbit. Almost
circular orbits with a low eccentricity show much better preservation
of the energy than highly elliptical orbits. An instability occurs at
a critical eccentricity which depends on the step size.  This can be
explained by the fact the the topology of the energy levels of the
modified energy changes with $h$, thus leading to unbounded
trajectories and instability.

We used the second-order St\"ormer--Verlet method to produce
Figure~\ref{figkepler}.  We ran the experiments again with two
fourth-order splitting methods: Yoshida's scheme based on
extrapolation~\cite{yoshida90coh} and a fourth-order scheme due to
Blanes and Moan~\cite{blanes02psr}. The last scheme is optimized for
problems of the type we have considered. It has very small error
coefficients at the cost of many stages, leading to coordinate errors
which are typically three orders of magnitude smaller than Yoshida's
method at the same computational cost. The plots for the drift in
modified energy of both Yoshida's method and the Blanes--Moan method
look the same as for the St\"ormer--Verlet method. In particular, the
constant~$c$ in $\exp(-c/h)$ is the same. However, the drift in the
modified energy for the St\"ormer--Verlet method is approximately a
factor of three smaller than Yoshida's method and a factor of four
smaller than the method of Blanes and Moan.

\begin{figure}
  \includegraphics[width=0.5\linewidth]{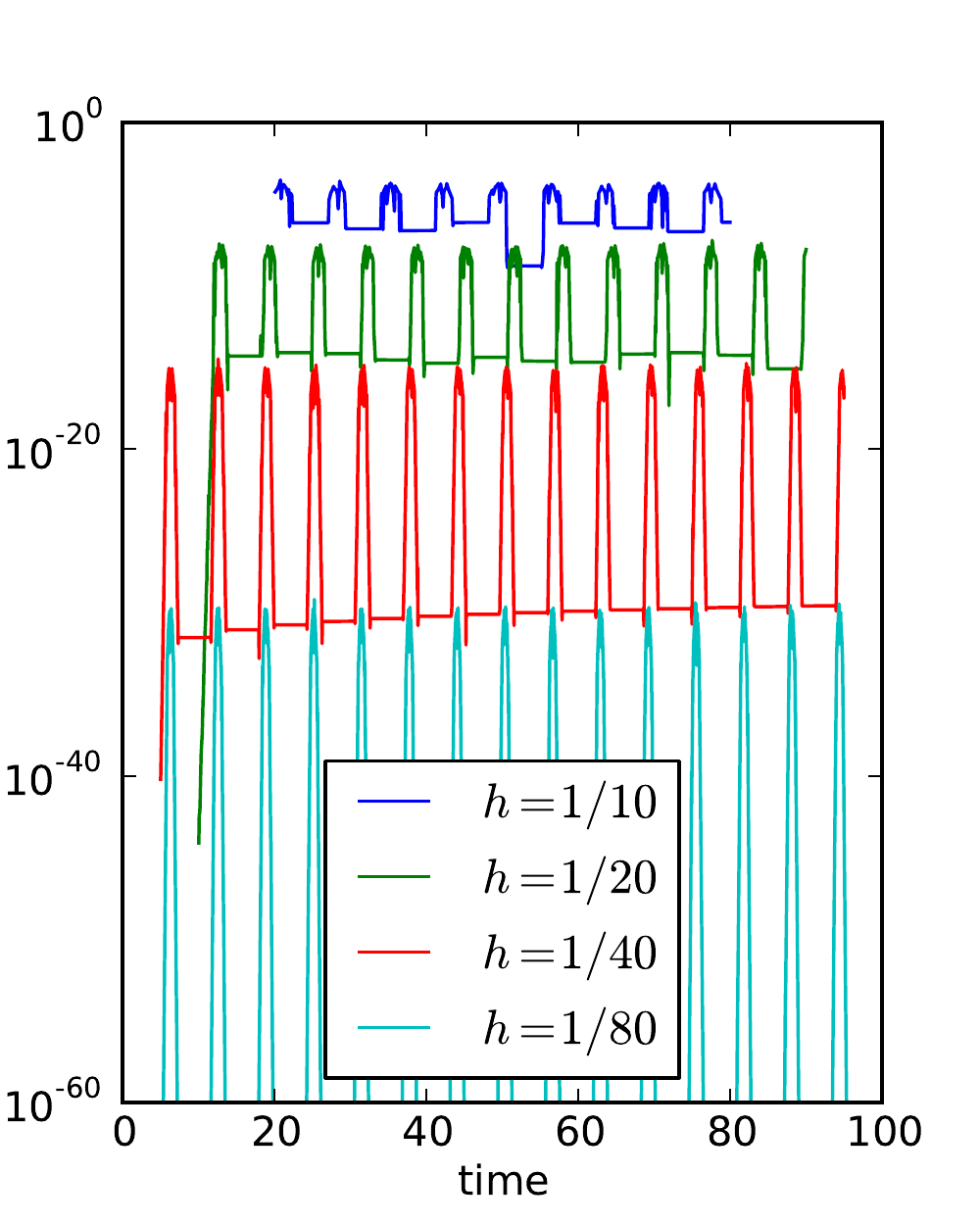}%
  \includegraphics[width=0.5\linewidth]{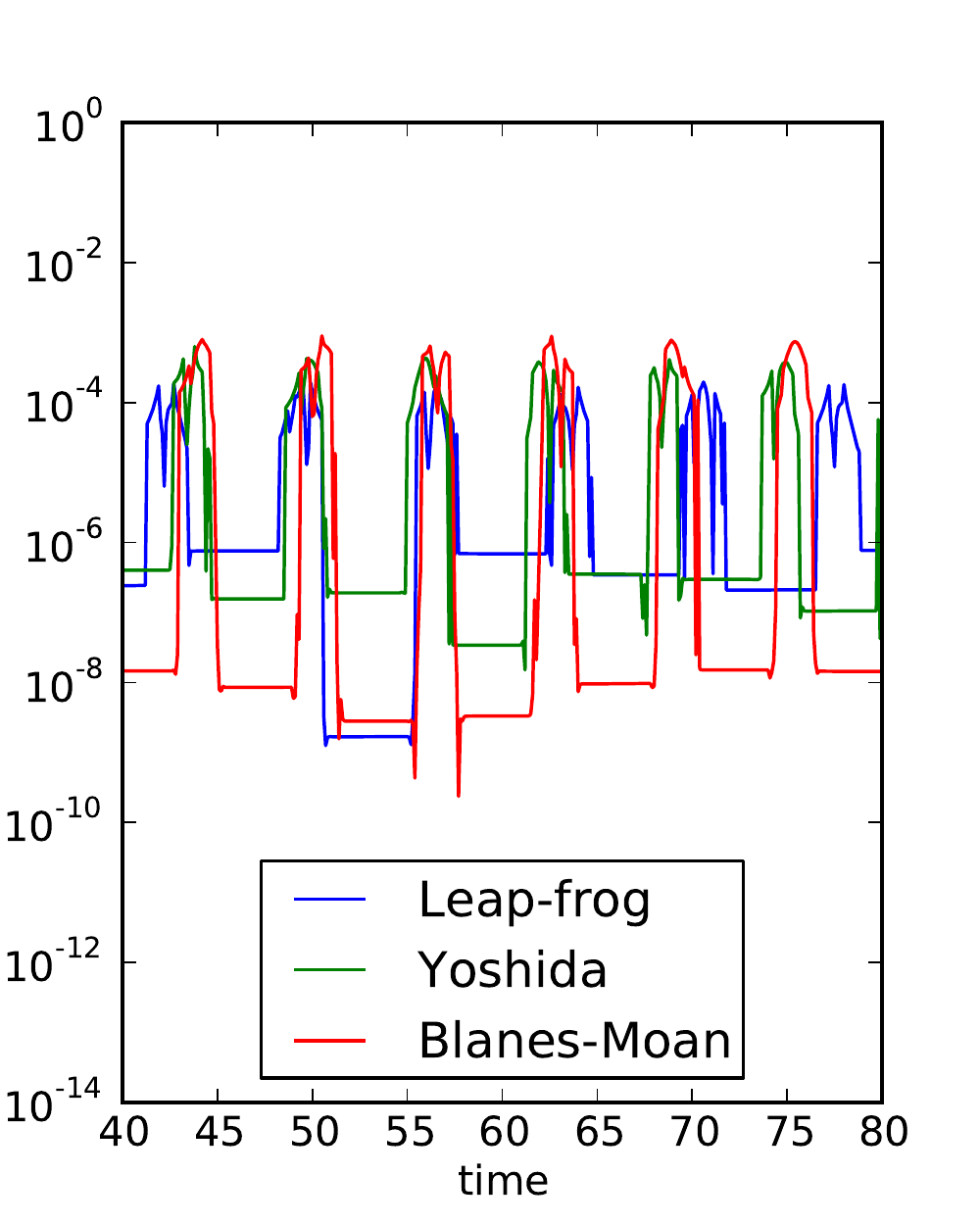}
  \caption{The difference between modified energy at a given time and
    the initial modified energy for the Kepler method with
    eccentricity $ecc=0.6$. The left plot shows the results for the
    St\"ormer--Verlet method for different step sizes, while the right
    plot shows the results for different methods. All methods are run
    with step size~$h=0.1$, so St\"ormer--Verlet does considerably
    less work.}
  \label{figkepler2}
\end{figure}

The left plot in figure~\ref{figkepler2} illustrates for several
different step sizes how the modified energy varies. There are peaks
when the particle is near the singularity at the origin. The crucial
point is that the energy essentially recovers its value after this
point before another close encounter. The plot on the right compares
the three different methods. It is seen that the methods give rather
different results, even though the maximal variation in the modified
energy is almost the same for the methods. The Blanes--Moan method
seems to preserve the modified energy better after the close
encounter, which might indicate a special advantage of this method
when applied to the Kepler problem. 
If, however, the time steps are scaled so that the computational cost
is the same for the three methods, the St\"ormer--Verlet method will
preserve the modified energy better than the high-order methods.

\begin{figure}
  \centering
  \includegraphics[width=12cm]{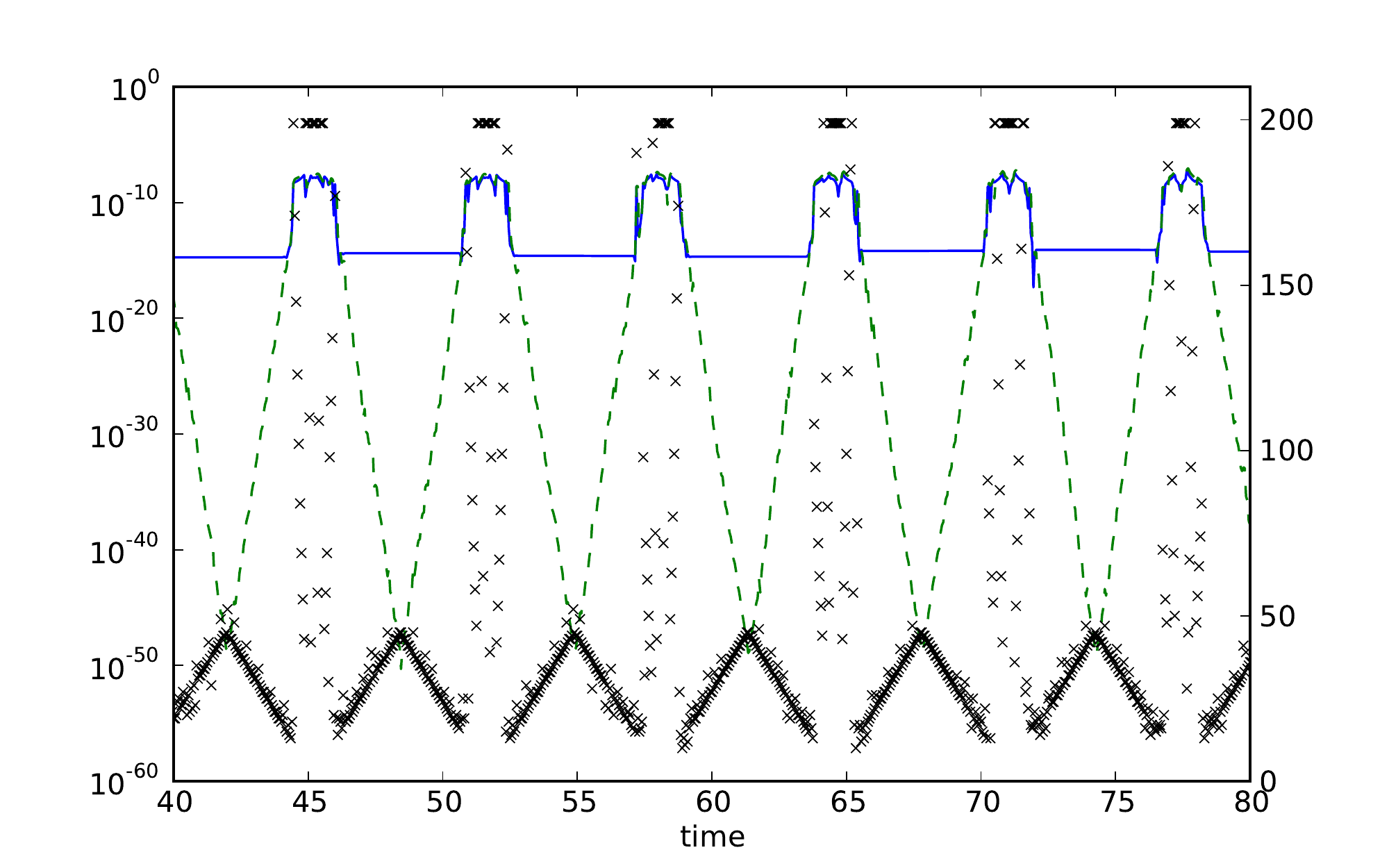}
  \caption{The change per step in the modified energy (dashed) and the
    accumlated change (solid), and the optimal order~$m$ (capped
    by~200, marked by 'x'). The axis for the energies is on the left, which the right axis is for the order~$m$. This is for the St\"ormer--Verlet method
    applied to the Kepler problem with $h=1/20$ and $ecc=0.6$.}
  \label{figkepler3} 
\end{figure} 

Figure~\ref{figkepler3} shows the accumulated change in energy,
$|\overline H(p_0,q_0,t_0) - \overline H(p_n,q_n,t_n)|$, and the
instantaneous change in energy, $|\overline H(p_{n-1},q_{n-1},t_{n-1})
- \overline H(p_n,q_n,t_n)|$, together with the optimal~$m$ found by
the error estimate~\eqref{errest}.  The graph shows that near the
singularity quite a high order~$m$ (which we bounded by 200) is
used. This indicates that information from the smooth parts of the
trajectory is used near singularities, and that it might be important
to use very high order approximations to get a clear picture of the
drift. The graph also indicates that the algorithm can track
instantaneous changes in energy.

Away from the parts of the orbit where the singularity at the origin
is approached most closely, a lower value of~$m$ suffices. It is thus
useful to find a more efficient method for finding the optimal~$m$
instead of computing the error estimate for all~$m$ up to some large
value (here,~200). We found good results with the following ad-hoc
termination criterion: compute the error estimate~\eqref{errest} for
all~$m$ up to the first value of~$m$ for which
\[
\max_{j=m-11,\dots,m-1} |T_{j,j} - T_{j-1,j-1}| \le 
\max_{j=m-10,\dots,m} |T_{j,j} - T_{j-1,j-1}|,
\]
and then choose the~$m$ with the minimal error estimate. The plots
produced by this criterion are nearly indistinguishable from the plots
produced when all~$m$ up to~200 are considered.

\subsection{H\'enon--Heiles system}

\begin{figure}
  \includegraphics[width=0.5\linewidth]{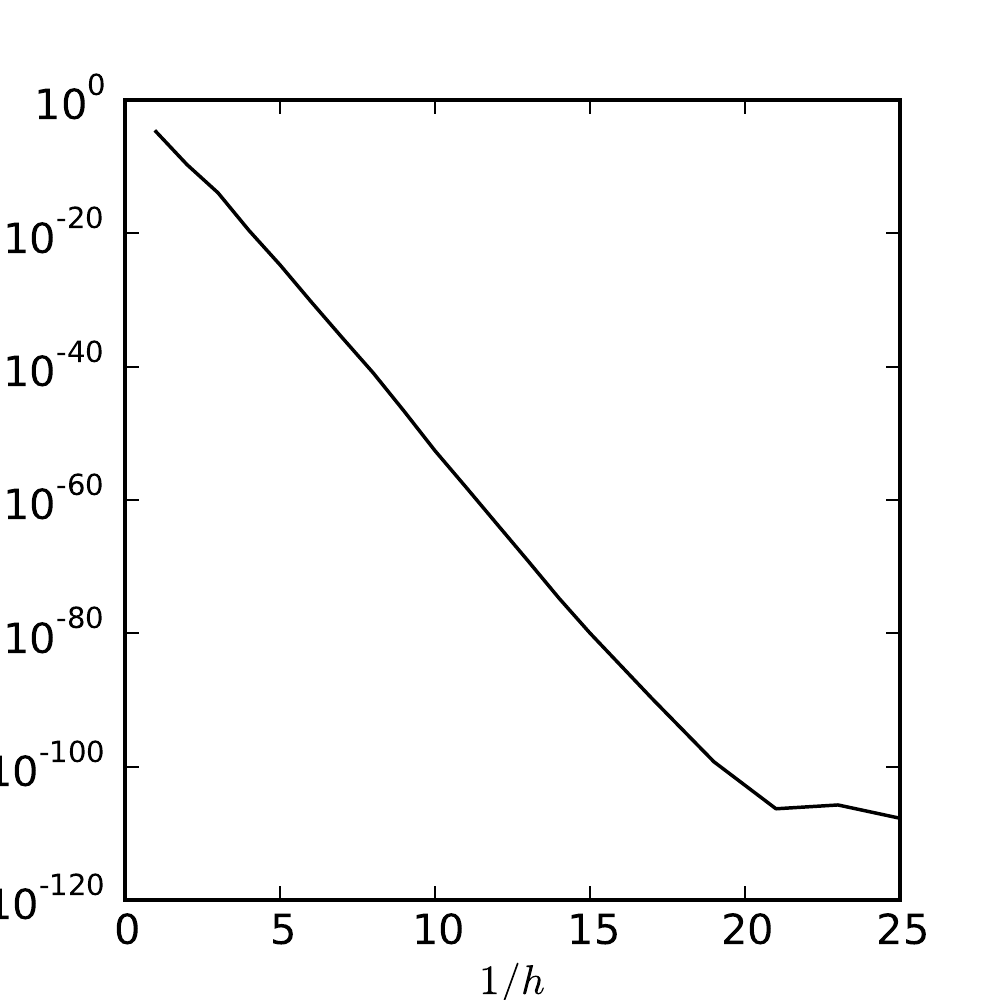}%
  \includegraphics[width=0.5\linewidth]{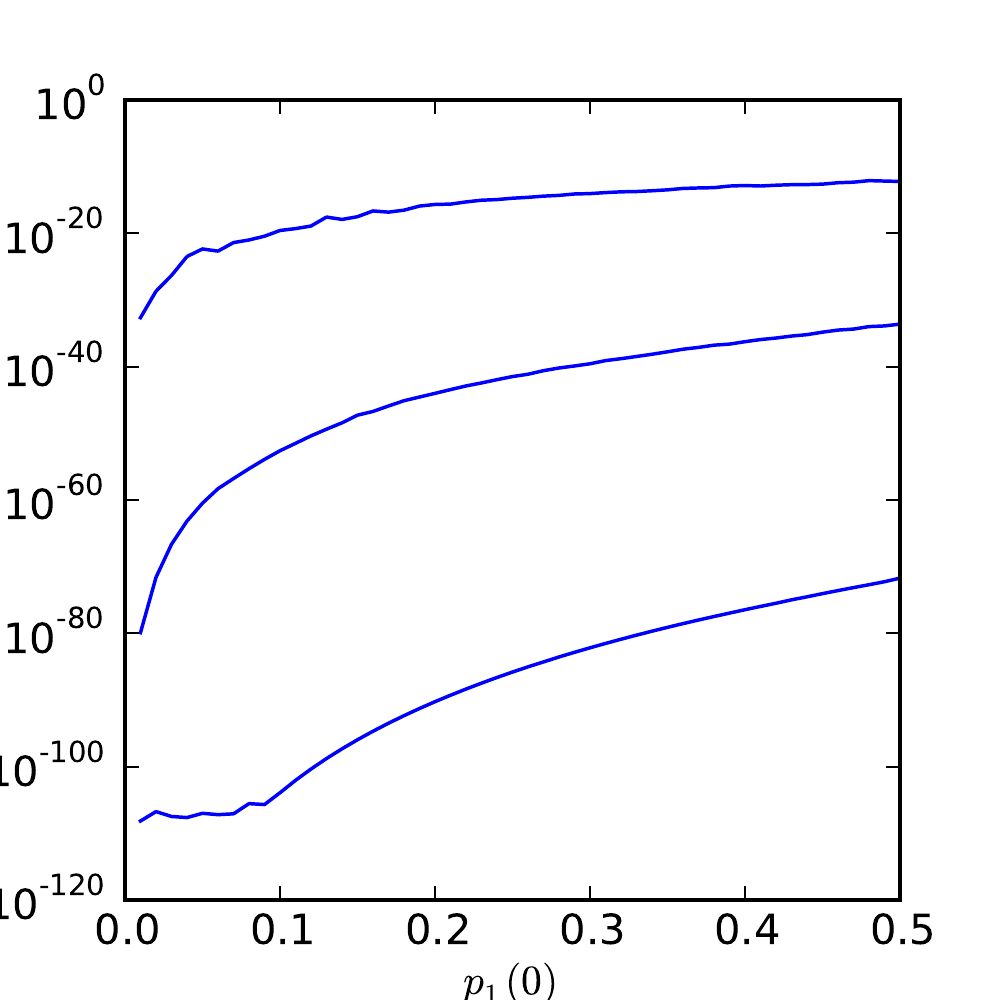}
  \caption{Drift in modified energy for the St\"ormer--Verlet method
    applied to the H\'enon--Heiles problem as a function of step
    size~$h$ with initial condition $p_1(0) = 0.1$ (left plot), and as
    a function of~$p_1$ for step sizes $h=0.25, 0.1, 0.05$ (right
    plot).}
  \label{hh1}
\end{figure}

The Hamiltonian of the H\'enon--Heiles system is given by
\[
H = \frac12 (p_1^2 + p_2^2) + 
\frac12 (q_1^2 + q_2^2 + 2q_1^2q_2 - \tfrac23 q_2^3). 
\]
Skeel \textsl{et al.}\ investigated the theoretical $\exp(-c/h)$
behaviour of the drift in the modified energy for this system, and
report that the results are ``less convincing''~\cite[\S2.4]{engle05med}.
We revisit this problem, using instead the extrapolation method to
achieve arbitrary high orders. Figure~\ref{hh1} shows that the
expected exponential smallness is indeed present, and that there is no
problem in using the algorithm other than allowing for large values
of~$m$ (we again capped~$m$ at~200). The effect of round-off error
becomes visible when $1/h$ exceeds~20; remember that all computations
are done with 120~digits.

The right plot shows how the maximal deviation of the modified energy
changes as the initial condition for $p_1$ is varied; the initial
conditions for the other variables are fixed as $q_1 = q_2 = p_2 = 0$.
This plot shows that there is no abrupt change in energy preservation
when moving from regular, integrable motions (the region with energy
$H < 1/12$ or, equivalently, $p_1 < 1/\sqrt{6}\approx 0.4$) to the
chaotic regime of phase space (where $H > 1/12$).

We also applied the fourth-order methods due to Yoshida and Blanes and
Moan to this problem, with the same results as for the Kepler
problems: the St\"ormer--Verlet method shows slightly better energy
preservation, but the value of~$c$ in the $\exp(-c/h)$ dependence is
the same.

\section{Conclusions}

We have supplied rigorous estimates for a numerical algorithm that
computes the modified energy for methods based on operator splitting
of Hamiltonian systems. The estimate shows that the procedure can
recover exponentially small estimates, known to exist
theoretically. The estimates exhibit the same dependence on the
important parameters $\tilde r_y$, $\tst$ and $\|f\|_\domD$, and can
therefore in principle be used to extract their values from
simulations. When comparing different splitting algorithms, it seems
that in the limit $h\rightarrow 0$ the exponential remainder term only
weakly depends on the method coefficients. Thus when considering the
additional cost of optimized, many-stage, methods these will have a
larger drift than the second-order St\"ormer--Verlet algorithm. In
other words, when it comes to preserving the modified energy, cheap,
low-order methods are preferable. Although we have not considered ODEs
originating from Hamiltonian semidiscretization of PDEs it seems
likely that for long time simulations a low-order method such as
St\"ormer--Verlet is preferable if energy preservation is important.

\section*{Appendix}

\begin{proof}[Proof of Lemma \ref{extlem}]
Without loss of generality we assume that $n=0$.

By representing (\ref{cdiff}) as a contour integral we have
\[
D_my(0) = \frac 1{2\pi i} \sum_{j=1}^m \oint_{\gamma}\frac{(-1)^{j+1} (m!)^2}{\tst j(m-j)!(m+j)!} \left\{ \frac{1}{z-j\tst}-\frac{1}{z+j\tst} \right\} y(z) dz,
\]
where the contour $\gamma$ includes the points $-m\tst,\ldots,m\tst$
and excludes singularities of~$y$, as sketched in Figure~\ref{extlem}.

\begin{figure}
\psfrag{r1}{{\small $\rho$}}
\psfrag{r2}{{\small$\rho$}}
\psfrag{r3}{{\small$\rho$}}
\psfrag{co}{{\small$\gamma$}}
\psfrag{m1}{{\small$-mh$}}
\psfrag{m2}{{\small$-(m-1)\tst$}}
\psfrag{m3}{{\small$-(m-2)\tst$}}
\psfrag{m7}{{\small$-\tst$}}
\psfrag{m8}{{\small$\tst$}}
\psfrag{m4}{{\small$(m-2)\tst$}}
\psfrag{m5}{{\small$(m-1)\tst$}}
\psfrag{m6}{{\small$m\tst$}}
\psfrag{Im}{{\small${\Im}(z)$}}
\psfrag{Re}{{\small${\Re}(z)$}}
\centerline{\includegraphics[width=375pt]{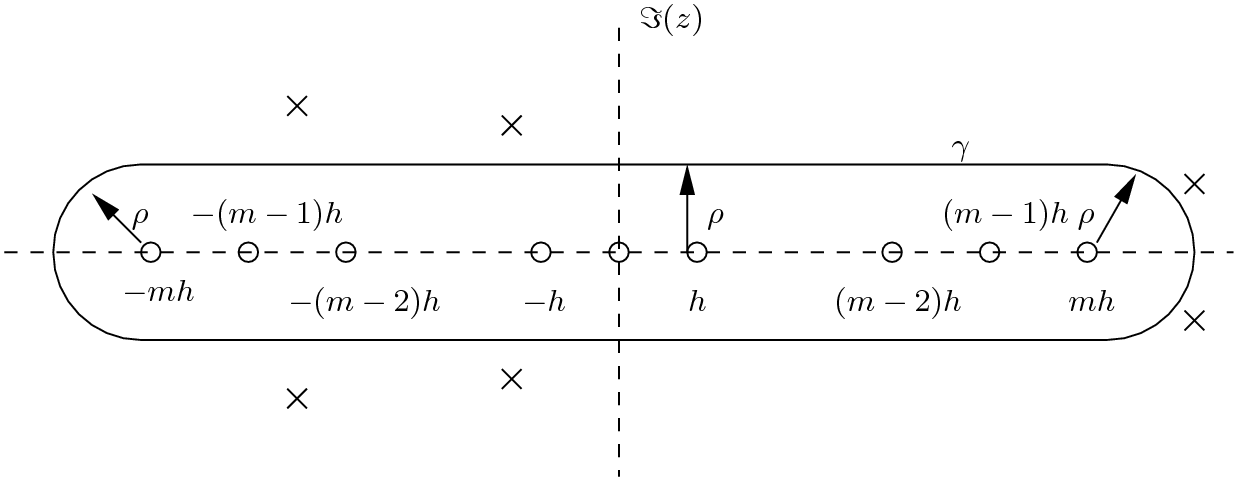}}
\caption{The contour of integration used in the proof of Lemma~\ref{extlem}.}
\end{figure}

The derivative is given by $y'(0) = \frac 1{2\pi i} \oint_\gamma
\frac{y(z)dz}{z^2}$, so the error in the approximation becomes
\begin{equation}
E_m(y)(0) = D_my(0)-y'(0)=\frac 1{2\pi i} \oint_{\gamma} K_m(z)y(z)dz,
\label{errorci}
\end{equation}
where the kernel is defined by
\[
K_m(z) = \frac{(-1)^{m+1} (m!)^2 \tst^{2m}}{z^2 (z^2-\tst^2)(z^2-(2\tst)^2)\cdots (z^2-(m\tst)^2)}.
\]
Along the curve~$\gamma$, the kernel~$K_m$ achieves its maximum in
modulus at $z=i\rho$, and the maximum is
\begin{align*}
  |K_m(i\rho)| &= \frac{(m!)^2 \tst^{2m}}{\rho^2(\rho^2+\tst^2)\cdots(\rho^2+(m\tst)^2)}=\frac{1}{\rho^2 (1+\frac{\rho^2}{\tst^2})\cdots(1+\frac{\rho^2}{(m\tst)^2})} \\
  &= \frac{\pi }{\rho\tst \sinh(\frac{\pi \rho}{\tst})} \prod_{j=m+1}^\infty \left(1+\frac{\rho^2}{(\tst j)^2}\right)
\end{align*}
where the last equality follows from $\prod_{j=1}^\infty
(1+\frac{\rho^2}{(\tst j)^2}) = \frac{\tst}{\pi\rho} \sinh(\pi
\rho/\tst)$.  The product can be bounded as 
\begin{align*}
  \log \prod_{j=m+1}^\infty \left(1+\frac{\rho^2}{(\tst j)^2}\right)
  &= \sum_{j=m+1}^\infty \log\left(1+\frac{\rho^2}{(\tst j)^2}\right) \\
  &\leq \int_{m}^\infty \log\left(1+\frac{\rho^2}{(\tst x)^2}\right)dx
  \leq \frac{\rho^2}{\tst^2 m},
\end{align*}
yielding $\prod_{j=m+1}^\infty (1+\frac{\rho^2}{(\tst j)^2}) \leq
\exp(\frac{\rho^2}{\tst^2 m})$, and thus
\[
|K_m(i\rho)| \leq \frac{\pi }{\rho\tst \sinh(\frac{\pi \rho}{\tst})} \exp\left(\frac{\rho^2}{\tst^2 m}\right).
\]
Since the length of the contour is $2\pi \rho + 4m\tst$, the error
expression~\eqref{errorci} can be bounded as
\[
|D_m y(0)-y'(0)| \leq \frac { (\pi\rho+2m\tst)\exp\left(\frac{\rho^2}{\tst^2 m}\right)}{\rho \tst\sinh\left(\frac{\pi \rho}{\tst}\right)} \|y\|_{\rho} 
\]
where $\|y\|_{\rho}:=\sup_{|\Im(z)| < \rho} |y(z)|_\infty$. This upper
bound is minimized by choosing $m$ so that $\frac{d}{dm}
(\pi\rho+2m\tst) \exp(\rho^2/\tst^2m)$ vanishes, i.e. $m \approx
\frac{\rho^2}{\tst^2}$. This gives the bound
\[
|D_my(0)-y'(0)| \leq \frac{ e (\pi+2\rho^2/\tst^2) }{\tst\sinh\left(\frac{\pi\rho}{\tst}\right)}\|y\|_\rho \leq C_1 \frac{\rho \exp\left(-\frac{\pi \rho}{\tst}\right)}{\tst^2}\|y\|_\rho
\] 
for some constant $C_1>0$.
\end{proof}

\begin{proof}[Proof of Corollary \ref{coro1}]
This follows from
\begin{align*}
  &|\overline H(p_n,q_n,t_n)-T^n_{m^*,m^*}| \\
  &\qquad \leq \tfrac12 |\overline q^T(\overline p'-D_{m^*}\overline p)|
  + \tfrac12 |\overline p^T(\overline q'-D_{m^*}\overline q)|
  + \tfrac12 |\overline \beta'-D_{m^*}\overline \beta| \\
  &\qquad \leq \tfrac12 |q_n|_1 \|\overline p'-D_{m^*}\overline p\|_\rho
  + \tfrac12 |p_n|_1 \|\overline q'-D_{m^*}\overline q\|_\rho
  + \tfrac12 \|\overline \beta'-D_{m^*}\overline \beta\|_\rho \\
  &\qquad \leq \frac{C_1\rho \exp(-\frac {\pi \rho}h)}{2h^2} \,
  (|q_n|_1\|\overline p\|_\rho + |p_n|_1\|\overline q\|_\rho
  + \|\overline \beta\|_\rho)
\end{align*}
where the last inequality follows from Lemma \ref{extlem}.
\end{proof}

\begin{proof}[Proof of Lemma \ref{lemmaana}]
We prove this by Picard iteration: set $\tilde x_1 = x_n$ and iterate 
$\tilde x_{k+1}(t) = x_n + \int_0^t g(\tilde x_k(s),s) \, ds$. Fix
$t\in \domD_t$, and assume at first that $r_t$ is sufficiently
large. For $\tilde x_{k+1},\tilde x_k \in \domD_y$
\begin{align*}
  &|g(\tilde x_{k+1},t)-g(\tilde x_k,t)|_\infty \\
  &\qquad = \left| \int_0^1 \frac{d}{ds}
  g  \Bigl(\tilde x_{k+1}+s(\tilde x_k-\tilde x_{k+1}),t \Bigr) 
  \, ds \right|_\infty \\
  &\qquad = \frac1{2\pi} \left| \int_0^1 \oint_{|z-s|=R} 
  \frac{g\bigl(\tilde x_{k+1}+z(\tilde x_k-\tilde x_{k+1}),t\bigr)}{(z-s)^2} 
  \, dz \, ds \right|_\infty \\
  &\qquad = \frac1{2\pi} \left| \int_0^1 \oint_{|w|=R} 
  \frac{g\bigl(\tilde x_{k+1}+s(\tilde x_k-\tilde x_{k+1})+w(\tilde x_k-\tilde x_{k+1}),t\bigr)}{w^2}
  \, dw \, ds \right|_\infty.  
\end{align*}
The radius $R$ is restricted by the requirement that the argument of
$g$ lies within $\domD_y$ or
\begin{align*}
  &|\tilde x_{k+1} + s(\tilde x_k-\tilde x_{k+1}) - x_n
  + w(\tilde x_k-\tilde x_{k+1})|_\infty \\
  &\qquad \leq |\tilde x_{k+1} + s(\tilde x_k-\tilde x_{k+1})-x_n|_\infty
  + r|(\tilde x_k-\tilde x_{k+1}))|_\infty \\
  &\qquad \leq |t|\|g\|_r +R|\tilde x_{k}-\tilde x_{k+1}|_\infty < r_y 
\end{align*}
by using 
\begin{multline*}
  |\tilde x_{k+1}+s(\tilde x_k-\tilde x_{k+1})-x_n)|_\infty \\
  \leq \sup_{|\tau|=|t|} \Bigl| 
  \int_0^{\tau} (1-s) \, |g(\tilde x_{k}(\tau),\tau)|_\infty +
  s \, |g(\tilde x_{k-1}(\tau),\tau)|_\infty \,d\tau \Bigr| 
  \leq |t|\|g\|_r.
\end{multline*}
We may therefore choose
\[ 
R = \eta \frac{r_y-|t|\|g\|_r}{|\tilde x_{k+1}-\tilde x_k|_\infty},
\quad 0<\eta<1
\] 
which gives the supremum-norm Lipschitz bound 
\[
|g(\tilde x_{k+1},t)-g(\tilde x_k,t)|_\infty \leq
\frac{\|g\|_r}{\eta(r_y-|t|\|g\|_r)}|\tilde x_{k+1}-\tilde x_k|_\infty.
\]
Let $\Delta_{k+1}(|t|) = \sup_{|\tau|=|t|}
|\tilde x_{k+1}(\tau)-\tilde x_k(\tau)|_\infty$, then the Picard iteration
$\tilde x_1 = x_n$, $\tilde x_{k+1}=x_n + \int_0^t g(\tilde x_k(s),s)
\, ds$, converges if $\Delta_k\rightarrow 0$ as $k\rightarrow\infty$, with
\[
\Delta_{k+1}(t) \leq \int_0^{|t|}
\frac{\|g\|_r}{\eta(r_y-s\|g\|_r)} \Delta_k(s) \, ds,\quad
\Delta_1(t)=|t|\|g\|_r 
\]
Introducing the generating function $G(\mu,|t|)=\sum_{k\ge1} \mu^k
\Delta_k$ we have 
\[
G(\mu,t) \leq \mu |t|\|g\|_r + \mu \int_0^{|t|}
\frac{\|g\|_r}{\eta(r_y-s\|f\|_\rho)} G(\mu,s) \, ds. 
\]
Since the terms in
this inequality are positive, an upper bound is the solution of
\[
\frac{dG^+(\mu,|t|)}{d|t|} = \mu \|g\|_r +
\frac{\|g\|_r}{\eta(r_y-|t|\|g\|_r)} G^+(\mu,|t|), \qquad
G^+(\mu,|t|=0)=0,
\]
i.e.
\[
G^+(\mu,|t|) = \frac{\mu\eta\rho}{\eta+\mu} \biggl(
\Bigl( 1 - \frac{|t|\|g\|_r}{r_y} \Bigr)^{-\mu/\eta}
- \Bigl( 1 - \frac{|t|\|g\|_r}{r_y} \Bigr) \biggr).
\]
Because $G^+$ is analytic in $\mu$ around $\mu=1$ provided
$|t|<\frac{r_y }{\|g\|_r}$, the sequence $\Delta_k(|t|)$ converges
uniformly to zero and hence $\tilde x_k(t)$ converges \emph{uniformly}
to the solution. Since each iterate $\tilde x_{k+1}(t) = x_n +
\int_0^t f(\tilde x_k(s),s) \, ds$ is analytic in $t\in \{t \in \CC:
|t| < \min\{\frac{r_y}{\|g\|_r},r_t\}\}$ the uniform convergence gives
by Weierstrass theorem that $y(t) = \tilde x_\infty(t)$ is analytic in
this domain as well.
\end{proof}

\begin{proof}[Proof of Theorem \ref{theomod}]
In Theorem \ref{BEA} we take $g=\overline f$, thus $\|g\|_r \leq 
\frac{2}{1-\eta}\|f\|_\domD$ with $\overline f$ analytic in $\overline
\domD'$. This gives that the $\overline y(t)$ is analytic in the
domain
\[
\left\{ \tau \in \CC : |\Im(\tau)| < 
\min \left( \frac{\tilde r_y-\delta}{\frac{2}{1-\eta}\|f\|_\domD},
  \frac{\eta \delta}{e\|f\|_\domD} \right) \right\}.
\]
We find that the bound is optimized by picking $\tilde r_y=\eta\delta$ where $\eta=\frac{e-2}{e-\sqrt{2e}}$. Thus we may take $\rho=\frac{\eta \delta}{e\|f\|_\domD}< 0.6835 \frac{\delta}{\|f\|_\domD}$ in Corollary \ref{coro1}
giving the exponentially small bound ($t=n\tst$)
\[
|\overline H(p_n,q_n,t)-T_{m^*,m^*}| 
\leq \frac{C_1}{2\tst^2} 
\exp\left(-C_2 \frac{\delta}{\tst \|f\|_\domD}\right)
(|q_n|_1 \|\overline p\|_\rho + |p_n|_1 \|\overline q\|_\rho
+ \|\overline \beta\|_\rho),
\] 
where $C_2=0.6834 \pi$. 
\end{proof}

\bibliographystyle{abbrv}
\bibliography{../jitse}

\end{document}